\documentclass[12pt,twoside]{article}
\usepackage[latin1]{inputenc}
\usepackage{amssymb,amsmath,amsfonts,paralist,amsrefs}
\setlength{\textheight}{21cm}
\textwidth 157mm
\setlength{\oddsidemargin}{2mm}
\setlength{\evensidemargin}{2mm}
%%%%%%%%%%%%%%%%%%%%%%%
\usepackage{fancyhdr}

%%%%%%%%%%%%%%%%%%%%%%%
%\usepackage[notcite,notref]{showkeys}
\newtheorem{theorem}{Theorem}

\newtheorem{lemma}[theorem]{Lemma}

\newcommand{\R}{\mathbb{R}}

\newcommand{\Q}{\mathbb{Q}}

\newcommand{\Hy}{\mathbb{H}}

\newcommand{\Ric}{\mbox{Ric}}

\newcommand{\Sp}{\mathbb{S}}

\newcommand{\Lo}{\mathbb{L}}
\DeclareMathAlphabet{\mathpzc}{OT1}{pzc}{m}{it}

\def\<{\langle}
\def\>{\rangle}

\def\bea{\begin{eqnarray*} }
\def\eea{\end{eqnarray*} }
\def\be{\begin{equation} }
\def\ee{\end{equation} }

\def\qed{\ifhmode\unskip\nobreak\fi\ifmmode\ifinner
\else\hskip5 pt \fi\fi\hbox{\hskip5 pt \vrule width4 pt
height6 pt  depth1.5 pt \hskip 1pt }}

\pagestyle{fancy}
\fancyhead[R,L]{\small}
\fancyhead[CO]{ \textsc{christos-raent onti}}
\fancyhead[CE]{ \textsc{einstein submanifolds with parallel mean curvature}}
\fancyfoot[C]{\thepage}

\begin{document}

\title{Einstein submanifolds with parallel mean curvature}
\author{Christos-Raent Onti}
\date{}
\maketitle
%\subjclass[2010]{Primary 53C40; Secondary 53B25.}
%\keywords{}

\begin{abstract} 
We provide a classification of Einstein submanifolds 
in space forms with flat normal bundle and parallel mean curvature. 
This extends a previous result due to Dajczer and Tojeiro \cite{DaRu1} for 
isometric immersions of Riemannian manifolds with constant sectional curvature.
\end{abstract}

\renewcommand{\thefootnote}{\fnsymbol{footnote}} 
\footnotetext{\emph{2010 Mathematics Subject Classification.} Primary 53B25; Secondary 53C40, 53C42.}     
\renewcommand{\thefootnote}{\arabic{footnote}} 

\renewcommand{\thefootnote}{\fnsymbol{footnote}} 
\footnotetext{\emph{Key words and phrases.} Einstein submanifolds, parallel mean curvature, flat normal bundle, principal normals.}     
\renewcommand{\thefootnote}{\arabic{footnote}}

%%%%%%%%%%%%%%%%%%%%%%%%%%%%%%%%%%%%%%%%%%%%%%
\section{Introduction}
\label{}
%%%%%%%%%%%%%%%%%%%%%%%%%%%%%%%%%%%%%%%%%%%%%%

The study of isometric immersions of Riemannian manifolds with 
constant sectional curvature into space forms is a basic topic in 
submanifold theory that goes back to Cartan \cite{Ca1,Ca2}. 
Several interesting results towards the classification of these 
immersions have been obtained ever since (see
\cite{Moore, DaRu, DaRu2}). In particular, Dajczer and Tojeiro 
\cite{DaRu1} provided a classification of all such isometric 
immersions with flat normal bundle and parallel mean curvature vector field.

A natural generalization of the concept of Riemannian manifolds with 
constant sectional curvature is the notion of manifolds with constant 
Ricci curvature, namely Einstein manifolds. Fialkow \cite{Fia} and Thomas 
\cite{Tho} initiated the study of isometric immersions of Einstein manifolds 
into space forms. Indeed, after the early work of Fialkow and Thomas, 
Ryan \cite{Ryan} gave a local classification of Einstein hypersurfaces 
in any space form.  
In arbitrary codimension, Di Scala
\cite{discala} proved that Einstein real K\"{a}hler submanifolds of a 
Euclidean space are totally geodesic provided that they are minimal. 
The same conclusion still holds for minimal Einstein submanifolds with flat normal 
bundle in the Euclidean space (see \cite{No}).

In the present paper, we classify isometric
immersions of Einstein manifolds into a 
complete and simply connected Riemannian manifold $\Q_c^N$ 
of constant sectional curvature $c$ of arbitrary codimension, with flat normal 
bundle and parallel mean curvature vector field. Our result, that extends the aforementioned 
result of Dajczer and Tojeiro \cite{DaRu1}, is stated as follows:

\begin{theorem}\label{main}
Let $f\colon M^n\rightarrow \Q_{c}^{N}, n\geq 3,$ be an isometric 
immersion of a connected Einstein manifold with Ricci curvature 
$\lambda$, flat normal bundle and parallel mean curvature vector 
field. Then one of the following holds: 
\begin{enumerate}[(i)]
   \item The immersion $f$ is totally umbilical.
   \item $\lambda=0=c$ and 
            $$
            f(M^n)\subset\Sp^1(r_1)\times\cdots\times \Sp^1(r_k)\times\R^{n-k}\subset\R^{n+k}.
            $$
    \item $\lambda=0<c$ and 
             $$
             f(M^n)\subset\Sp^1(r_1)\times\cdots\times \Sp^1(r_n)\subset\Sp^{2n-1}_c\subset \R^{2n},
             $$ 
             where $r_1^2+\cdots+r_n^2=1/c.$
   \item $\lambda=0>c$ and
            $$
            f(M^n)\subset\Hy^1(r_1) \times \Sp^1(r_2)\times\cdots\times \Sp^1(r_n)\subset 
            \Hy^{2n-1}_c\subset \Lo^{2n},
            $$ where 
            $-r_1^2+r_2^2+\cdots+r_n^2= 1/c.$
    \item $\lambda=c(n-k)>0$ and 
            $$
            f(M^n)\subset\Sp^{m_1}(\rho_1)\times\cdots\times 
            \Sp^{m_k}(\rho_k)\subset\Sp^{n+k-1}_c\subset \R^{n+k},
            $$ 
            where $\rho_i=\sqrt{(m_i-1)/\lambda},\  m_i\geq 2$ for all $1\leq i\leq k$.
   \item $f=j\circ g$, where $g$ is as in (ii), (iii), (iv) or (v) and $j$ is a totally umbilical inclusion.    
\end{enumerate}
\end{theorem}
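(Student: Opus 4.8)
The plan is to exploit the flat normal bundle to diagonalise all the shape operators simultaneously and then to read the geometry off the Gauss and Codazzi equations. On the open dense subset of $M^n$ where the number $s$ of distinct principal normals is locally constant, the commuting family $\{A_\xi\}$ gives an orthogonal splitting $TM=E_1\oplus\cdots\oplus E_s$ with distinct principal normals $\eta_1,\dots,\eta_s$, so that, writing $P_i$ for the orthogonal projection onto $E_i$ and $m_i=\dim E_i$, the second fundamental form is $\alpha(X,Y)=\sum_i\langle P_iX,P_iY\rangle\,\eta_i$ and the mean curvature vector satisfies $nH=\sum_i m_i\eta_i$. First I would feed this into the Gauss equation: for orthonormal $X\in E_i$ the sectional curvatures are $c+|\eta_i|^2$ inside $E_i$ and $c+\langle\eta_i,\eta_j\rangle$ against $E_j$, so computing $\mathrm{Ric}$ on a unit vector of $E_i$ and imposing $\mathrm{Ric}=\lambda$, together with $nH=\sum_i m_i\eta_i$, collapses the Einstein condition to the single identity
\[
\lambda=(n-1)c-|\eta_i|^2+n\langle\eta_i,H\rangle\qquad(1\le i\le s),
\]
equivalently that every $\eta_i$ lies on a fixed sphere centred at $\tfrac n2 H$ in the normal space. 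When $s=1$ this already gives $\eta_1=H$, so $f$ is totally umbilical, which is alternative (i).

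The second step, which I expect to be the main obstacle, is to promote parallel mean curvature to parallel principal normals. A direct computation with the Codazzi equation in the eigenbasis yields $\nabla^\perp_X\eta_i=\langle X,\nabla_TT\rangle(\eta_i-\eta_j)$ for a unit $T\in E_i$ and $X\in E_j$ with $j\ne i$, and $\nabla^\perp_X\eta_i=0$ for $X\in E_i$ when $m_i\ge 2$. I would then differentiate the identity above and use $\nabla^\perp H=0$ to get $\langle\nabla^\perp_X\eta_i,\,2\eta_i-nH\rangle=0$, while the identity itself gives the purely algebraic relation $\langle\eta_i-\eta_j,\,2\eta_i-nH\rangle=|\eta_i-\eta_j|^2$. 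Since the principal normals are pairwise distinct, substituting the Codazzi expression forces $\langle X,\nabla_TT\rangle\,|\eta_i-\eta_j|^2=0$, hence $\langle X,\nabla_TT\rangle=0$; this says each eigendistribution is autoparallel and each $\eta_i$ is parallel in all directions transverse to $E_i$, and one further use of $\nabla^\perp H=0$ along $E_i$ gives $\nabla^\perp\eta_i=0$ for every $i$. In particular the lengths $|\eta_i|$ and the inner products $\langle\eta_i,\eta_j\rangle$ are constant. The point of difficulty is precisely here: for a general submanifold with parallel mean curvature the individual principal normals need not be parallel, and it is the Einstein hypothesis, through the relation $\langle\eta_i-\eta_j,2\eta_i-nH\rangle=|\eta_i-\eta_j|^2\neq0$, that removes the obstruction.

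With the principal normals parallel and the eigendistributions autoparallel, the third step is to decompose the immersion. Composing $f$ with the umbilical inclusion of $\Q_c^N$ into the flat space $\R^{N+1}$ or $\Lo^{N+1}$ keeps the normal bundle flat and the principal normals parallel, placing us in the setting of the structure theory for submanifolds with flat normal bundle and parallel principal normals used by Dajczer and Tojeiro; this shows that $M$ splits locally as a Riemannian product $M_1\times\cdots\times M_s$ with $TM_i=E_i$ and that $f$ is the corresponding extrinsic product of totally umbilical immersions $f_i\colon M_i^{m_i}\to\Q_{c_i}$. Since each factor is a Riemannian product factor, the cross sectional curvatures vanish, forcing $\langle\eta_i,\eta_j\rangle=-c$ for $i\ne j$; feeding this back into the identity of the first step turns it into $\lambda=(m_i-1)(c+|\eta_i|^2)$. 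Thus each $M_i$ has constant curvature $\kappa_i=c+|\eta_i|^2$ and is realised as a standard round sphere $\Sp^{m_i}(\rho_i)$, or as a geodesic factor when $\eta_i=0$.

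It then remains to enumerate the admissible configurations. A factor with $m_i\ge2$ forces $\kappa_i=\lambda/(m_i-1)$, so $\kappa_i$ has the sign of $\lambda$ and $\rho_i=\sqrt{(m_i-1)/\lambda}$, whereas a one-dimensional factor forces $\lambda=0$ and leaves its radius free. Sorting by the signs of $\lambda$ and $c$: a factor of dimension $\ge2$ and negative curvature would require $\lambda<0$ and, by the Lorentzian signature of $\Lo^{2n}$ (which permits at most one hyperbolic factor), could not be balanced by any positively curved factor carrying the same value $\lambda=(m_i-1)\kappa_i$, so the only negatively curved factors that occur are one-dimensional and $\lambda\ge0$. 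Imposing that a totally umbilical factor has $\kappa_i\ge c$ and that the product of round factors lie in the appropriate space form — which yields the position relation $\sum_i\rho_i^2=1/c$, and in particular $\lambda=c(n-k)$ in the all-spheres case — produces exactly the lists (ii)–(v), the hyperbolic factor of (iv) coming from the timelike direction of $\Lo^{2n}$. If the first normal space fails to fill a totally geodesic $\Q_c^N$, then $f(M)$ sits in a proper totally umbilical $\Q_{c'}^{N'}$ and one of (ii)–(v) holds inside it, which is alternative (vi). Two subsidiary points would need care: checking that the autoparallel eigendistributions are genuinely parallel, so that the product is a true Riemannian product (this uses the full Codazzi equation for triples of distinct distributions together with the parallelism of the normals), and extending the conclusion from the dense set where $s$ is constant to all of the connected manifold $M^n$, which follows once the explicit product form is in hand.
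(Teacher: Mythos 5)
Your skeleton follows the paper quite closely — the eigendistribution decomposition, the Einstein identity $\lambda=(n-1)c-|\eta_i|^2+n\langle\eta_i,H\rangle$ (the paper's \eqref{en}, i.e.\ all principal normals lie on a sphere centred at $\tfrac n2H$), and the use of Codazzi together with $\nabla^\perp H=0$ to kill the mixed terms are exactly the paper's Lemma \ref{lem}; your step 2 is correct and even yields a slightly stronger intermediate statement than the paper records, namely $\nabla^\perp\eta_i=0$ for every $i$. The genuine gap is in step 3. There you conclude the local Riemannian splitting from ``the structure theory for submanifolds with flat normal bundle and parallel principal normals.'' No such implication holds: any isoparametric hypersurface of $\Sp^{n+1}$ with three or more distinct principal curvatures (e.g.\ Cartan's minimal hypersurface in $\Sp^4$) has flat normal bundle, parallel principal normals $\eta_i=\kappa_i N$ and autoparallel curvature distributions (by \eqref{cod1}, since the $\kappa_i$ are constant), yet it is not locally an extrinsic product. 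So parallel principal normals plus autoparallel eigendistributions do \emph{not} give parallel eigendistributions once $k\geq 3$, and parallelism of each $E_i$ in all tangent directions is precisely what de Rham and the Moore--Molzan theorem require. (For $k=2$ your argument is complete, since two complementary orthogonal autoparallel distributions are automatically parallel.)

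The fix — which is the actual content of the second half of the paper's Lemma \ref{lem} — is not ``Codazzi for triples together with the parallelism of the normals,'' as your closing caveat asserts; in the hypersurface counterexample both of those hold and the conclusion fails, because all the differences $\eta_i-\eta_j$ are collinear. What is needed is the Codazzi relation \eqref{cod3}, $\langle\nabla_X V,Z\rangle(\eta_j-\eta_l)=\langle\nabla_V X,Z\rangle(\eta_j-\eta_i)$ for $X\in E_i$, $V\in E_l$, $Z\in E_j$ with $i,j,l$ distinct, \emph{combined with the pointwise linear independence of $\eta_j-\eta_l$ and $\eta_j-\eta_i$}, which forces both coefficients to vanish. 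That independence is where the Einstein hypothesis enters a second time: by \eqref{en} all the vectors $\eta_m-\tfrac n2H$ have the same norm, so a relation $\eta_l-\eta_i=\mu(\eta_l-\eta_j)$ gives, after taking norms, equality in Cauchy--Schwarz and hence $\eta_i=\eta_j$, a contradiction. You already derived the sphere condition in step 1, so the gap is repairable with the tools you have on the table, but as written the decisive implication of your proof is unjustified (and, read as a general statement, false). The remaining enumeration in your step 4 is essentially the paper's case analysis (its Lemmas \ref{lem2} and \ref{lem3} plus the appeal to Dajczer--Tojeiro in the flat case), though your sign discussion via Lorentzian signature is looser than the paper's Cauchy--Schwarz argument showing that $m_i\geq2$ for all $i$ forces $\lambda>0$.
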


{\noindent{\it Acknowledgments.}} The author would like to thank Prof. Theodoros 
Vlachos for his valuable suggestions and the referee for his close reading of the 
first draft of this paper which led to various improvements.

%%%%%%%%%%%%%%%%%%%%%%%%%%%%%%%%%%%%%%%%%%%%%%
\section{Preliminaries}
\label{pre}
%%%%%%%%%%%%%%%%%%%%%%%%%%%%%%%%%%%%%%%%%%%%%%

Let $f\colon M^n\rightarrow \Q_c^N$ be an isometric immersion of an 
$n$-dimensional Riemannian manifold with Levi-Civita connection 
$\nabla$. The second fundamental form $\alpha$ of $f$ is a symmetric 
section of the vector bundle ${\rm Hom}(TM\times TM,N_f M)$, where 
$N_f M$ is the normal bundle of $f$. We say that $f$ is {\it totally umbilical} 
if 
$$
\alpha(X,Y)=\langle X,Y\rangle H,
$$ 
where $H$ is the mean curvature vector field.

A straightforward computation of the Ricci tensor gives using the 
Gauss equation for $f$ that
\begin{equation}\label{gausseq}
\Ric(X,Y)=c(n-1)\langle X,Y\rangle+n\langle \alpha(X,Y),H\rangle-
\sum_{i=1}^n \langle\alpha(X,X_i),\alpha(Y,X_i)\rangle,\ X,Y\in TM,
\end{equation}
where $X_1,\dots,X_n$ is a local orthonormal frame of the tangent 
bundle $TM$.

The immersion $f$ has {\it flat normal bundle} if the curvature tensor 
of the normal connection $\nabla^\perp$ of $N_fM$ vanishes. 
In this case, it is a standard fact (see \cite{Re1}) that at any point 
$x\in M^n$ there exists a set of unique 
pairwise distinct normal vectors $\eta_i(x)\in N_fM(x),\ 1\leq i\leq s=s(x)$, 
called the {\it principal normals} of $f$ at $x$. 
Moreover, there is an associated orthogonal splitting of the tangent space as
$$
T_xM=E_1(x)\oplus\cdots\oplus E_{s}(x),
$$
where
\begin{equation}\label{E}
E_i(x)=\big\{X\in T_xM:  \alpha(X,Y)=\langle X,Y\rangle\eta_i(x)\ 
\text{for all} \ Y\in T_xM\big\}.
\end{equation}
Hence, the second fundamental form of $f$ at $x$ is given by  
\begin{equation}\label{sff}
\alpha(X,Y)=\sum_{i=1}^{s}\langle X^i,Y^i\rangle \eta_i(x), \ 
X,Y\in T_xM, 
\end{equation} 
where $X^i$ denotes the $E_i(x)$-component of $X$. 

The function $x\in M^n\rightarrow s(x)\in\{1,2,\dots,n\}$
is lower semi-continuous. Hence, if $G_k$ denotes the interior 
of the subset $\{x\in M^n : s(x)=k\},$
then $\cup_{k=1}^n G_k$ is open and dense in $M^n$.
On each $G_k$ the maps $x\in M^n\mapsto \eta_i(x),\ 1\leq i\leq k,$ are smooth 
vector fields called the {\it principal normal vector fields} of $f$ 
and the distributions $x\in M^n\mapsto E_i(x),\ 1\leq i\leq k,$ 
are smooth. The Codazzi equation on $G_k$ is easily seen to yield 
\begin{equation} \label{cod1} 
\langle\nabla_X Y,Z\rangle(\eta_i-\eta_j) = 
\langle X,Y\rangle\nabla_Z^\perp \eta_i 
\end{equation}
and
\begin{equation}\label{cod3}
\langle\nabla_X V, Z\rangle(\eta_j-\eta_l) = 
\langle\nabla_V X, Z\rangle(\eta_j-\eta_i). 
\end{equation}
for all $X,Y\in E_i, Z\in E_j,$ and $V\in E_l$, where 
$1\leq i\neq j\neq l\neq i\leq k$.

%%%%%%%%%%%%%%%%%%%%%%%%%%%%%%%%%%%%%%%%%%%%%%
\section{Extrinsic product of immersions}
\label{ext}
%%%%%%%%%%%%%%%%%%%%%%%%%%%%%%%%%%%%%%%%%%%%%%

In this section, we recall the notion of extrinsic product of 
immersions in any space form (cf. \cite{Toj}) that will be 
used in the proof of our main result.

\medskip

A map $f\colon M^n\rightarrow \R^{N}$ from a product manifold 
$M^n=\Pi_{i=1}^k M_i$ is called the {\it extrinsic product of immersions} 
$f_i\colon M_i\rightarrow \R^{m_i},\ 1\leq i\leq k,$ if there exist 
an orthogonal decomposition $\R^N=\Pi_{i=0}^k \R^{m_i}$, with 
$\R^{m_0}$ possibly trivial, such that $f$ is given by 
$$f(x)=(v,f_1(x_1),\dots,f_k(x_k))$$ for all $x=(x_1,\dots,x_k)\in M^n$ 
and $v\in\R^{m_0}$.

A map $f\colon M^n\rightarrow \Sp^{N}(r)\subset\R^{N+1}$ from a 
product manifold $M^n=\Pi_{i=1}^k M_i$ into the sphere 
$$
\Sp^N(r)=\left\{x\in\R^{N+1} : \Vert x\Vert=r\right\},
$$ 
is called the {\it extrinsic product of immersions} 
$f_i\colon M_i\rightarrow \Sp^{m_i-1}(r_i)\subset \R^{m_i},\ 1\leq i\leq k,$
if there exist an orthogonal decomposition 
$\R^{N+1}=\Pi_{i=0}^k \R^{m_i}$, with $\R^{m_0}$ possibly trivial, 
such that $f$ is given by 
$$
f(x)=(v,f_1(x_1),\dots,f_k(x_k))
$$ for all $x=(x_1,\dots,x_k)\in M^n$ with $v\in\R^{m_0}$
and 
$$
\Vert v\Vert^2+\sum_{i=1}^k r_i^2=r^2.
$$

We now consider extrinsic products in the hyperbolic space 
$$\Hy^N(r)=\left\{x=(x_0,\dots,x_{N})\in \Lo^{N+1} : \langle x,x\rangle=-r^2,\ x_0>0\right\},$$
where $\Lo^{N+1}$ denotes the Lorentz space of dimension $N+1$. In this case, there are 
 three different types of extrinsic products called hyperbolic, elliptic 
and parabolic.

A map $f\colon M^n\rightarrow \Hy^N(r)\subset \Lo^{N+1}$ from a product manifold 
$M^n=\Pi_{i=1}^k M_i$ is called the {\it extrinsic product of hyperbolic type} of immersions $f_1,\dots,f_k$ if there exist an orthogonal decomposition
$$\Lo^{N+1}=\Lo^{m_1}\times \Pi_{i=2}^{k+1} \R^{m_i},$$
with $\R^{m_{k+1}}$ possibly trivial, and immersions $$f_1\colon M_1\rightarrow \Hy^{m_1-1}(r_1)\subset \Lo^{m_1}\
\text{and} \ f_i\colon M_i\rightarrow \Sp^{m_i-1}(r_i)\subset \R^{m_i},\ 2\leq i\leq k,$$ 
such that $f$ is given by
$$f(x)=(f_1(x_1),\dots,f_k(x_k),v)$$ 
for all $x=(x_1,\dots,x_k)\in M^n$ with $v\in \R^{m_{k+1}}$ and 
$$-r_1^2+\sum_{i=2}^k r_i^2+\Vert v\Vert^2=-r^2.$$

A map $f\colon M^n\rightarrow \Hy^N(r)\subset \Lo^{N+1}$ from a product manifold $M^n=\Pi_{i=1}^k M_i$ is called
the {\it extrinsic product of elliptic type} of immersions $f_1,\dots,f_k$ if there exist an orthogonal decomposition
$$\Lo^{N+1}=\Pi_{i=1}^k \R^{m_i}\times \Lo^{m_{k+1}},$$ a vector $v\in \Lo^{m_{k+1}}$, and immersions
$f_i\colon M_i\rightarrow \Sp^{m_i-1}(r_i)\subset \R^{m_i},\ 1\leq i\leq k,$
such that $f$ is given by $$f(x)=(f_1(x_1),\dots,f_k(x_k),v)$$ for all $x=(x_1,\dots,x_k)\in M^n$ 
 with $$\sum_{i=1}^k r_i^2+\langle v,v\rangle=-r^2.$$

Finally, a map $f\colon M^n\rightarrow \Hy^N(r)\subset \Lo^{N+1}$ from a product manifold $M^n=\Pi_{i=1}^k M_i$ is called
the {\it extrinsic product of parabolic type} of immersions $f_1,\dots,f_k$ if there exist $s\in\{1,\dots,k\}$, an orthogonal decomposition
$$\Lo^{N+1}=\Lo^{l+1}\times \Pi_{i=s+1}^{k+1} \R^{m_i},$$ with $\R^{m_{k+1}}$ possibly trivial, and immersions
$$f_i\colon M_i\rightarrow \R^{m_i},\ 1\leq i\leq s,\ \text{and}\ f_j\colon M_j\rightarrow \Sp^{m_j-1}(r_j)\subset\R^{m_j},\ s+1\leq j\leq k \ \text{if}\ s<k,$$
such that $f$ is given by
$$f(x)=\big({\mathsf i}(f_1(x_1),\dots,f_s(x_s)),f_{s+1}(x_{s+1}),\dots,f_{k}(x_{k}),v\big)$$
for all $x=(x_1,\dots,x_k)\in M^n$ with $v\in\R^{m_{k+1}}$. Here $${\mathsf i}\colon \Pi_{i=1}^s \R^{m_i}=\R^{l-1} \rightarrow \Hy^l(r_1)\subset\Lo^{l+1}$$ denotes an umbilical inclusion with $$-r_1^2+\sum_{i=2}^k r_i^2+\Vert v\Vert^2=-r^2.$$

\medskip

Let $f\colon M^n \to\Q_c^N$ be an isometric immersion of a
Riemannian manifold. If $M^n=\Pi_{i=1}^k M_i$ is a product manifold 
then the second fundamental form $\alpha$ is  said to be {\it adapted} to the product structure of $M^n$ if 
$$\alpha(X_i,X_j)=0\ \text{for all} \ X_i\in TM_i, \ X_j\in TM_j\ \text{with}\ 1\leq i\neq j\leq k,$$ 
where the tangent bundles $TM_i$ are identified with the corresponding tangent distributions
to $M^n$.
The next result, which is due to Moore \cite{Mo3} for the case $c=0$ and to Molzan \cite{Mo,Re} 
for the case $c\neq 0$, shows that products of isometric immersions are characterized by this property among 
isometric immersions of Riemannian products.

\begin{theorem}\label{MMR}
Let $f\colon M^n\rightarrow \Q_{c}^{N}$ be an isometric immersion of a Riemannian product manifold $M^n=\Pi_{i=1}^k M_i$ 
with adapted second fundamental form. Then $f$ is an extrinsic product of isometric immersions.
\end{theorem}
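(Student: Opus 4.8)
\noindent\emph{Proof idea.} The plan is to reduce to a flat ambient space and then to attach to each factor a parallel subbundle of the pulled-back ambient tangent bundle that ``remembers'' only that factor. First I would dispose of the case $c\neq 0$ by composing $f$ with the umbilical inclusion of $\Q_c^N$ into the appropriate flat space (Euclidean if $c>0$, Lorentzian $\Lo^{N+1}$ if $c<0$). Writing $F$ for the resulting immersion and $\hat\alpha$ for its second fundamental form, one has $\hat\alpha(X,Y)=\alpha(X,Y)-c\langle X,Y\rangle F$; since $\langle X_i,X_j\rangle=0$ whenever $X_i\in TM_i$ and $X_j\in TM_j$ with $i\neq j$, the correction term never mixes the factors and $\hat\alpha$ is again adapted. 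Thus it suffices to treat the case of a flat ambient space, which I denote by $\Ee^{m}$ (with $m=N$ when $c=0$ and $m=N+1$ otherwise) and equip with the flat connection $\nab$.

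For each $i$ set $D_i:=TM_i$ and let $N_1^i:=\operatorname{span}\{\alpha(X,Y):X,Y\in D_i\}$ be the first normal space of the $i$-th factor; I then define the subbundle
\[
\mathcal{L}_i:=f_*D_i\oplus N_1^i\subseteq f^*T\Ee^{m}.
\]
Because $M^n$ is a Riemannian product, each $D_i$ is a parallel distribution ($\nabla_XY\in D_i$ for $Y\in D_i$ and all $X$), and because $\alpha$ is adapted one has $\alpha(D_i,TM)\subseteq N_1^i$. The Gauss formula $\nab_X f_*Y=f_*\nabla_XY+\alpha(X,Y)$ therefore keeps $\nab_X f_*Y$ inside $\mathcal{L}_i$ whenever $Y\in D_i$. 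For the normal directions one uses the Weingarten formula $\nab_X\eta=-f_*A_\eta X+\nabla^\perp_X\eta$ and must verify, for $\eta\in N_1^i$, both that $A_\eta X\in D_i$ and that $\nabla^\perp_X\eta\in N_1^i$; granting these, $\nab_X\eta\in\mathcal{L}_i$ and each $\mathcal{L}_i$ is $\nab$-parallel.

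These last two facts are where the Codazzi equation enters, and this is the step I expect to be the main obstacle. A direct computation with Codazzi and the parallelism of the $D_i$ shows, for $X\in D_i$ and $Y,Z\in D_j$ with $i\neq j$, that the right-hand side $(\nab_Y\alpha)(X,Z)$ vanishes by the adapted condition, whence $\nabla^\perp_X\alpha(Y,Z)=\alpha(\nabla_XY,Z)+\alpha(Y,\nabla_XZ)\in N_1^j$; that is, the normal connection does not leak between factors. The delicate remaining point is the mutual orthogonality $N_1^i\perp N_1^j$, which then forces $A_\eta X\in D_i$ through $\langle A_\eta X,W\rangle=\langle\alpha(X,W),\eta\rangle$ for $W\in D_j$. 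Once parallelism is established, a parallel subbundle of the (globally trivial) flat bundle $f^*T\Ee^{m}$ over the connected manifold $M^n$ is a fixed linear subspace $V_i\subseteq\Ee^{m}$, and the orthogonality of the $\mathcal{L}_i$ yields an orthogonal splitting $\Ee^{m}=V_0\oplus V_1\oplus\cdots\oplus V_k$ with $V_0$ the orthogonal complement. A technical wrinkle to handle here is that $\operatorname{rank}N_1^i$ need not be constant; I would work on the open dense subset where these ranks are locally constant and extend the conclusion to all of $M^n$ by continuity and connectedness.

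Finally I would reconstruct $f$ from the splitting. Writing $f=v+\sum_{i=1}^k f_i$ according to $\Ee^{m}=V_0\oplus\cdots\oplus V_k$, the inclusion $f_*D_j\subseteq V_j$ forces the $V_i$-component $f_i$ to have vanishing derivative along every $D_j$ with $j\neq i$; hence $f_i$ depends only on $x_i$ and $v\in V_0$ is constant. This exhibits $f$ as an extrinsic product $f(x)=(v,f_1(x_1),\dots,f_k(x_k))$. Translating back through the umbilical inclusion when $c\neq 0$, the position-vector direction is what distinguishes the hyperbolic, elliptic and parabolic types, and the factors land in the spheres $\Sp^{m_i-1}(r_i)$ or hyperbolic spaces $\Hy^{m_i-1}(r_i)$ cut out inside the $V_i$, completing the identification of $f$ as an extrinsic product of isometric immersions. \qed
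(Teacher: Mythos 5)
First, a point of comparison: the paper does not prove Theorem~\ref{MMR} at all---it is quoted as a known result, due to Moore \cite{Mo3} for $c=0$ and Molzan \cite{Mo,Re} for $c\neq 0$---so your proposal has to be measured against the standard (Moore's) argument. Your reduction to a flat ambient space is correct, but the core construction then fails: the subbundle $\mathcal{L}_i=f_*D_i\oplus N_1^i$ is in general \emph{not} parallel, and it fails in precisely the one direction your Codazzi computation does not cover, namely derivatives along $D_i$ itself. For $X\in D_j$, $j\neq i$, you correctly get $\nabla^\perp_X\alpha(Y,Z)=\alpha(\nabla_XY,Z)+\alpha(Y,\nabla_XZ)\in N_1^i$ for $Y,Z\in D_i$; but for $X\in D_i$ there is no reason for $(\nabla^\perp_X\alpha)(Y,Z)$ to stay in $N_1^i$, since first normal spaces of an immersion are not parallel (this is exactly the phenomenon of higher osculating spaces). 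Concretely, take $M_1=M_2=\R$ and $f(x_1,x_2)=(\gamma(x_1),x_2)\in\R^3\times\R=\R^4$, where $\gamma$ is a unit-speed curve with curvature $\kappa\neq 0$ and torsion $\tau\neq 0$. This is a product immersion with adapted second fundamental form, yet $\mathcal{L}_1=\operatorname{span}\{T,\mathbf{n}\}$ while $\nab_T\mathbf{n}=-\kappa T+\tau\mathbf{b}\notin\mathcal{L}_1$. So your parallelism claim is false, not merely unproven, and the argument collapses there. (By contrast, the point you single out as ``delicate,'' namely $N_1^i\perp N_1^j$, is immediate: apply the Gauss equation to $X_i,Y_i\in D_i$ and $X_j,Y_j\in D_j$, using that the mixed curvatures of a Riemannian product vanish and that $\alpha(X_i,Y_j)=0$.)

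The repair is to need less. Moore's argument uses only your very first computation: for vector fields $X_i\in TM_i$, $X_j\in TM_j$ with $i\neq j$, one has $\nab_{X_j}f_*X_i=f_*\nabla_{X_j}X_i+\alpha(X_j,X_i)=0$, i.e.\ the mixed ambient Hessian of $f$ vanishes. Hence locally $f(x)=v+\sum_i f_i(x_i)$ with each $f_{i*}$ depending only on the $i$-th coordinate. Orthogonality of the fixed subspaces $V_i=\operatorname{span}\{f_{i*}(T_{x_i}M_i): x_i\in M_i\}$ then follows without any bundle machinery: to compare $f_{i*}$ at one point with $f_{j*}$ at another, evaluate both at a third point carrying the $i$-th coordinate of the first and the $j$-th coordinate of the second, where they are orthogonal because $f$ is isometric and $D_i\perp D_j$. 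This bypasses parallel subbundles, first normal spaces, and your rank-constancy worry entirely. (Alternatively one can replace $N_1^i$ by the full parallel hull of $f_*D_i$, i.e.\ all iterated derivatives, but the orthogonality then needs an induction on the order of differentiation.) Finally, for $c<0$ your closing paragraph hides the real content of Molzan's case: in $\Lo^{N+1}$ the subspaces $V_i$ may be degenerate (contain null directions), so orthogonal complements no longer yield direct-sum splittings, and this degeneracy is exactly what produces the parabolic type; it cannot be dismissed as bookkeeping about the position vector.
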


%%%%%%%%%%%%%%%%%%%%%%%%%%%%%%%%%%%%%%%%%%%%%%
\section{The proof}
\label{proof1}
%%%%%%%%%%%%%%%%%%%%%%%%%%%%%%%%%%%%%%%%%%%%%%

Let $f\colon M^n\to \Q_c^N$ be an isometric immersion as in Theorem \ref{main}. In the following 
we are working on an open subset $G_k$ with $k\geq 2$.

\begin{lemma}\label{lem}
Around every point $p\in G_k$ there is a neighborhood $U$ that is a Riemannian product of Riemannian manifolds 
$M_1,\dots,M_k$. Moreover, $\left. f\right\vert_U$ is the extrinsic product of totally umbilical isometric immersions 
$f_1,\dots,f_k$.
\end{lemma}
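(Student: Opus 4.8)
The plan is to prove that the eigendistributions $E_1,\dots,E_k$ are parallel on a neighborhood $U$ of $p$, and then to let the structure theorems do the rest. Once each $E_i$ is parallel, the local de Rham decomposition theorem gives the Riemannian product $U=M_1\times\cdots\times M_k$ with $TM_i=E_i$. By \eqref{sff} we have $\alpha(X,Y)=0$ whenever $X\in E_i$ and $Y\in E_j$ with $i\neq j$, so the second fundamental form is adapted to this product, and Theorem \ref{MMR} then realizes $f|_U$ as an extrinsic product of isometric immersions $f_1,\dots,f_k$. Restricting \eqref{sff} to $E_i=TM_i$ shows that the second fundamental form of $f_i$ is $\langle\,\cdot\,,\cdot\,\rangle\,\eta_i$, so each $f_i$ is totally umbilical. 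Thus the entire statement reduces to showing that every $E_i$ is a parallel distribution, i.e. that both $E_i$ and $E_i^\perp=\bigoplus_{j\neq i}E_j$ are autoparallel.

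First I would establish that $E_i$ is autoparallel. Evaluating the Gauss equation \eqref{gausseq} on a unit $X\in E_i$, and using $\Ric=\lambda\langle\,\cdot\,,\cdot\,\rangle$ together with \eqref{sff}, yields the pointwise identity $|\eta_i|^2=c(n-1)-\lambda+n\langle\eta_i,H\rangle$ for every $i$; in particular $\langle\eta_i-\eta_j,H\rangle=\tfrac1n\big(|\eta_i|^2-|\eta_j|^2\big)$. Setting $X=Y$ in \eqref{cod1} gives $\nabla^\perp_Z\eta_i=\langle\nabla_X X,Z\rangle(\eta_i-\eta_j)$ for unit $X\in E_i$ and $Z\in E_j$, $j\neq i$. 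Differentiating the identity above along $Z$ and using $\nabla^\perp H=0$ produces $2\langle\nabla^\perp_Z\eta_i,\eta_i\rangle=n\langle\nabla^\perp_Z\eta_i,H\rangle$; substituting the previous expression for $\nabla^\perp_Z\eta_i$ then forces $|\eta_i-\eta_j|^2=0$ unless $\langle\nabla_X X,Z\rangle=0$. Since the principal normals are pairwise distinct, we conclude $\langle\nabla_X X,Z\rangle=0$, hence $\nabla^\perp_Z\eta_i=0$ for all $Z\perp E_i$, and then \eqref{cod1} gives $\langle\nabla_X Y,Z\rangle=0$ for all $X,Y\in E_i$, $Z\in E_i^\perp$; that is, $E_i$ is autoparallel.

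The delicate part is the autoparallelism of $E_i^\perp$, which requires controlling the mixed terms $\langle\nabla_V W,X\rangle$ with $X\in E_i$ and $V,W\in E_i^\perp$. When $V,W$ lie in a single $E_j$, the relabeled \eqref{cod1} together with $\nabla^\perp_X\eta_j=0$ (just proved) gives $\langle\nabla_V W,X\rangle=0$. The remaining case is $V\in E_l$, $W\in E_j$ with $i,j,l$ pairwise distinct, and here I would combine the three instances of \eqref{cod3} obtained by projecting onto $E_i$, $E_j$, $E_l$ with the relations $\langle\nabla_A B,C\rangle+\langle\nabla_A C,B\rangle=0$ valid for vectors in mutually orthogonal distributions. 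This yields a chain of normal-valued identities, typified by $\langle\nabla_V W,X\rangle(\eta_i-\eta_j)=\langle\nabla_W V,X\rangle(\eta_i-\eta_l)$, from which all the mixed coefficients vanish as soon as $\eta_i-\eta_j$ and $\eta_i-\eta_l$ are linearly independent.

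The main obstacle, therefore, is to rule out three distinct collinear principal normals. This is precisely where the Einstein hypothesis re-enters: if $\eta_i,\eta_j,\eta_l$ lay on a common line, write $\eta_p=\eta_0+t_p\mathbf v$ with distinct parameters $t_i,t_j,t_l$; the identity $|\eta_p|^2-n\langle\eta_p,H\rangle=c(n-1)-\lambda$ then becomes a quadratic polynomial in $t_p$ taking the same value at three distinct points, so it is constant and its leading coefficient $|\mathbf v|^2$ vanishes. As the normal bundle of $f$ is positive definite, this gives $\mathbf v=0$, contradicting the distinctness of the three normals. Hence no such collinear triple exists, the mixed terms vanish, $E_i^\perp$ is autoparallel, and each $E_i$ is parallel, which completes the reduction and with it the proof. (For $k=2$ there are no triples, and the autoparallelism of $E_i^\perp=E_j$ already follows from the second paragraph.)
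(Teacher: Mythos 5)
Your proposal is correct and follows essentially the same route as the paper: autoparallelism of each $E_i$ from \eqref{cod1} combined with the Einstein/parallel-$H$ identity $\Vert\eta_i\Vert^2-n\langle\eta_i,H\rangle=c(n-1)-\lambda$ (the paper's \eqref{en} in disguise), vanishing of the mixed terms from \eqref{cod3} plus pointwise non-collinearity of the principal normals, and then de Rham's theorem with Theorem \ref{MMR} and \eqref{E} to conclude. The only cosmetic difference is in ruling out a collinear triple $\eta_i,\eta_j,\eta_l$: you use a quadratic-polynomial count in the affine parameter, while the paper uses that all $\eta_p-\frac{n}{2}H$ have equal norm together with equality in Cauchy--Schwarz; both formalize the same fact that distinct points on a sphere centered at $\frac{n}{2}H$ cannot be collinear.
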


\begin{proof}
We claim that each distribution $E_i,\ 1\leq i\leq k,$ is parallel, that is 
$$\nabla_X Y\in E_i\ \text{for all}\ X\in TG_k,\ Y\in E_i \ \text{and}\ 1\leq i\leq k.$$
First we prove that $\nabla_X Y\in E_i$ for all $X,Y\in E_i$ and $1\leq i\leq k$. Indeed, from \eqref{cod1} we have that
 $$\langle \nabla_X Y,Z\rangle(\eta_i-\eta_j)=\langle X,Y\rangle\nabla_Z^\perp \big(\eta_i-\frac{n}{2}H\big)$$ 
 for any $X, Y\in E_i$ and $Z\in E_j$ with $j\neq i$.
Thus, we obtain
\begin{equation}\label{pp}
2\langle \nabla_X Y,Z\rangle \big<\eta_i-\eta_j,\eta_i-\frac{n}{2}H\big>= \langle X,Y\rangle Z\big(\Vert \eta_i-\frac{n}{2}H\Vert^2\big).
\end{equation}
Using \eqref{gausseq}, \eqref{sff} and the hypothesis that $M^n$ is an Einstein manifold we have 
\begin{equation}\label{en}
\big\Vert \eta_i-\frac{n}{2}H\big\Vert^2=\big\Vert \frac{n}{2}H\big\Vert^2-\lambda+c(n-1),\ 1\leq i\leq k.
\end{equation}
Using \eqref{en} we observe that
$$ \big<\eta_i-\eta_j,\eta_i-\frac{n}{2}H\big>=\big\Vert \eta_i-\frac{n}{2}H\big\Vert^2-\big<\eta_i-\frac{n}{2}H,\eta_j-\frac{n}{2}H\big>\neq 0.$$
Then \eqref{pp} implies that $\nabla_X Y\in E_i$ for all $X,Y\in E_i$ and any $1\leq i\leq k$.

Now, we show that $\nabla_X Y\in E_i$ for all $X\in E_j$ and $Y\in E_i$ with $j\neq i$. To this aim, we consider 
$Y\in E_i,\ X\in E_j,\ Z\in E_l\subset E_i^\perp$ 
and distinguish the following two cases. 

If $l=j$, then by using the previous argument, we have
\begin{equation}\label{par1}
\langle \nabla_X Y,Z\rangle=-\langle Y,\nabla_X Z\rangle=0.
\end{equation}

If $l\neq j$, then from \eqref{cod3} we obtain
$$\langle \nabla_X Y,Z\rangle (\eta_l-\eta_i)=\langle \nabla_Y X,Z\rangle (\eta_l-\eta_j).$$
We claim that $\eta_l-\eta_i$ and $\eta_l-\eta_j$ are pointwise linearly independent. Indeed, we assume to the contrary 
 that $$\eta_l-\eta_i=\mu (\eta_l-\eta_j)$$ for some $\mu\in\R\smallsetminus\{0\}$. Then 
 $$(1-\mu)\big(\eta_l-\frac{n}{2}H\big)=\eta_i-\frac{n}{2}H-\mu\big(\eta_j-\frac{n}{2}H\big).$$
By taking the norms and using \eqref{en} we obtain that $\eta_i=\eta_j$, which is a contradiction.
Thus 
\begin{equation}\label{par2}
\langle \nabla_X Y,Z\rangle=0 \ \text{for all}\ X\in E_j, Y\in E_i, Z\in E_l, 
\end{equation}
with $l\neq i\neq j$.
Therefore, from \eqref{par1} and \eqref{par2}, we obtain that $\nabla_X Y\in E_i$ for all $X\in E_j$ and $Y\in E_i$ with $j\neq i$. 
This completes the proof of our claim. 

Now, de Rham's theorem implies that around every point $p\in G_k$ there is a neighborhood
$U$ that is the Riemannian product of the integral manifolds 
$M_1,\dots,M_k$ of the distributions $E_1,\dots,E_k$ respectively, through a point $q\in U$. 
Moreover, since the second fundamental form of $f$ is adapted, 
Theorem \ref{MMR} implies that $\left. f\right\vert_U$ is an extrinsic product of 
isometric immersions $f_1,\dots,f_k$, which due to \eqref{E} have to be totally umbilical. 
\qed
\end{proof}

\begin{lemma}\label{lem2}
Let $m_i=\dim M_i$. Then the following holds:
\begin{equation}\label{eq}
(m_i-1) (c+\Vert\eta_i\Vert^2)=\lambda,\ 1\leq i\leq k.
\end{equation}
Moreover, if $m_i\geq 2$ then the sectional 
curvature of $M_i$ is 
\begin{equation}\label{sec}
K_{M_i}=\frac{\lambda}{m_i-1}.
\end{equation}
Furthermore, if $m_i\geq 2$ for all 
$1\leq i\leq k$, then $\lambda>0$. 
\end{lemma}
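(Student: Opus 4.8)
The plan is to exploit the local product structure furnished by Lemma~\ref{lem}. Write $U=M_1\times\cdots\times M_k$, where each $f_i\colon M_i\to\Q_c^N$ is totally umbilical with umbilical normal $\eta_i$. Since $M^n$ is a Riemannian product, for a unit vector $X\in TM_i=E_i$ one has $\Ric(X,X)=\Ric_{M_i}(X,X)$, so the Einstein hypothesis forces each factor $M_i$ to be Einstein with the same constant $\lambda$. To establish \eqref{eq} and \eqref{sec} I would compute the intrinsic curvature of the factors. When $m_i\geq 2$, the Gauss equation applied to the umbilical immersion $f_i$ gives, for every orthonormal pair $X,Y\in E_i$, the sectional curvature $K_{M_i}=c+\langle\eta_i,\eta_i\rangle=c+\Vert\eta_i\Vert^2$, which is constant on $M_i$; hence $\Ric_{M_i}(X,X)=(m_i-1)(c+\Vert\eta_i\Vert^2)$, and comparing with $\lambda$ yields both \eqref{eq} and \eqref{sec}. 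When $m_i=1$ the factor is a curve, so $\Ric_{M_i}=0$, which forces $\lambda=0$ and makes \eqref{eq} hold trivially.

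For the final assertion, assume $m_i\geq 2$ for every $i$ and aim to prove $\lambda>0$. The two ingredients I would use are the relation coming from \eqref{gausseq} and the Einstein condition (equivalently \eqref{en}), namely $n\langle\eta_i,H\rangle=\lambda-c(n-1)+\Vert\eta_i\Vert^2$ for each $i$, together with the identity $nH=\sum_i m_i\eta_i$ read off from \eqref{sff}. The idea is to evaluate the weighted sum $\sum_i m_i\Vert\eta_i-H\Vert^2$ in two ways. Expanding it and using $nH=\sum_i m_i\eta_i$ makes all the $\Vert H\Vert^2$ contributions collapse, giving $\sum_i m_i\Vert\eta_i-H\Vert^2=\sum_i m_i\big(\Vert\eta_i\Vert^2-\langle\eta_i,H\rangle\big)$. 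On the other hand, substituting $\langle\eta_i,H\rangle$ from the relation above and then \eqref{eq} in the form $\Vert\eta_i\Vert^2+c=\lambda/(m_i-1)$, each summand simplifies to $\Vert\eta_i\Vert^2-\langle\eta_i,H\rangle=\dfrac{\lambda(n-m_i)}{n(m_i-1)}$.

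Combining the two evaluations gives $\sum_i m_i\Vert\eta_i-H\Vert^2=\lambda\sum_i\dfrac{m_i(n-m_i)}{n(m_i-1)}$. Since $k\geq 2$ we have $n-m_i>0$ for each $i$, and as every $m_i\geq 2$ the coefficient $\sum_i m_i(n-m_i)/(n(m_i-1))$ is strictly positive; because the left-hand side is nonnegative, this already yields $\lambda\geq 0$. Moreover, if $\lambda=0$ then every $\Vert\eta_i-H\Vert^2$ vanishes, so $\eta_1=\cdots=\eta_k=H$, contradicting the fact that the principal normals are pairwise distinct on $G_k$ with $k\geq 2$. Hence $\lambda>0$. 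I expect the main obstacle to be exactly this last step: identifying the weighted combination $\sum_i m_i\Vert\eta_i-H\Vert^2$ whose sign is controlled by $\lambda$ through a manifestly positive factor, and observing that the borderline case $\lambda=0$ is ruled out precisely by the distinctness of the principal normals.
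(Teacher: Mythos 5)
Your proposal is correct, and its decisive step takes a genuinely different route from the paper's. For \eqref{eq} and \eqref{sec} the two arguments are essentially the same computation in different packaging: you split the Ricci tensor along the Riemannian product and apply the Gauss equation inside each totally umbilical leaf, whereas the paper evaluates $\Ric(X,X)$ for unit $X\in E_i$ directly from \eqref{gausseq} and \eqref{sff}, together with the relation $\langle\eta_i,\eta_j\rangle=-c$ of \eqref{gauss1} (which it first extracts from Lemma \ref{lem} and the Gauss equation). One cosmetic slip: the $f_i$ of Lemma \ref{lem} map into factors of an orthogonal decomposition of the ambient space, not into $\Q_c^N$; what you actually use is that $f$ restricted to each leaf is totally umbilical in $\Q_c^N$ with umbilical normal $\eta_i$, which is correct by \eqref{E} and the total geodesy of the leaves, so no harm is done. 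The real divergence is the proof that $\lambda>0$ when all $m_i\geq 2$. The paper argues by contradiction: if $\lambda\leq 0$, then \eqref{eq} gives $\Vert\eta_i\Vert^2\leq -c$, hence $c<0$; Cauchy--Schwarz applied to $\langle\eta_i,\eta_j\rangle=-c$ forces equality, so $\eta_j=\mu_{ij}\eta_i$ with $\mu_{ij}\geq 1$ and $\mu_{ij}\mu_{ji}=1$, whence $\eta_i=\eta_j$, contradicting the distinctness of principal normals. You instead establish the identity
$$
\sum_{i=1}^k m_i\Vert\eta_i-H\Vert^2=\lambda\sum_{i=1}^k\frac{m_i(n-m_i)}{n(m_i-1)},
$$
using $nH=\sum_i m_i\eta_i$ and the Einstein relation $n\langle\eta_i,H\rangle=\lambda-c(n-1)+\Vert\eta_i\Vert^2$; your algebra checks out. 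Since $k\geq 2$ and every $m_i\geq 2$, the coefficient of $\lambda$ is strictly positive, so $\lambda\geq 0$, and $\lambda=0$ would force $\eta_1=\cdots=\eta_k=H$, again contradicting distinctness. Your argument is more direct: it requires no case analysis on the sign of $c$ and no Cauchy--Schwarz, pinning the sign of $\lambda$ to a single manifestly nonnegative quantity, with the borderline case eliminated by distinctness. What the paper's route buys is geometric transparency about how negativity would fail: it shows $\lambda\leq 0$ could only occur in a hyperbolic ambient space with all principal normals parallel, which is impossible. Both proofs are complete and rest on the same inputs (Lemma \ref{lem}, the Gauss equation, the Einstein condition), so the difference is one of mechanism rather than of prerequisites.
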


\begin{proof}
It follows from Lemma \ref{lem} and the Gauss equation
that the principal normals $\eta_1,\dots,\eta_k$ of $f$ satisfy 
\begin{equation}\label{gauss1}
\langle\eta_i,\eta_j\rangle=-c,\ 1\leq i\neq j\leq k.
\end{equation}
Equation \eqref{eq} follows from our assumption, \eqref{gausseq} 
and \eqref{gauss1}. If $m_i\geq 2$ then  \eqref{eq}
and the Gauss equation imply \eqref{sec}.

Now, suppose that $m_i\geq 2$ for all $1\leq i\leq k$ and assume to 
the contrary that $\lambda\leq 0$. Then  
 \eqref{eq} implies that 
 
\begin{equation}\label{ee}
\Vert \eta_i\Vert^2\leq -c\  \ \text{for all}\ \ 1\leq i\leq k
\end{equation}
 and thus $c<0$. Therefore,  
from \eqref{gauss1}, \eqref{ee} and the Cauchy-Schwarz inequality we obtain that
\begin{equation}\label{eq2}
-c=\<\eta_i,\eta_j\>\leq \Vert \eta_i\Vert \Vert \eta_j\Vert\leq -c\ \ \text{for all}\ \  i\neq j.
\end{equation}
Hence, 
$\eta_j=\mu_{ij}\eta_i$ for some $\mu_{ij}>0$ and $1\leq i\neq j\leq k.$
From \eqref{gauss1}
 and \eqref{ee} we have that $\mu_{ij}\geq 1$ for all $1\leq i\neq j\leq k.$
Since $\eta_j=\mu_{ij}\eta_i=\mu_{ij}\mu_{ji}\eta_j$, it follows that $\mu_{ij}\leq 1$. Therefore, 
$\mu_{ij}=1$ which is a contradiction.\qed
\end{proof}

\begin{lemma}\label{lem3}
Assume that there exists $1\leq i\leq k$ such that $m_i=1$. Then $U$ is flat.
\end{lemma}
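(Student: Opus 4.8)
The plan is to read off the value of $\lambda$ directly from the hypothesis and then reduce the flatness of $U$ to the flatness of each factor in the product decomposition provided by Lemma \ref{lem}. First I would specialize \eqref{eq} to an index $i$ with $m_i=1$: the factor $m_i-1$ then vanishes, so the left-hand side is identically zero and we are forced to conclude $\lambda=0$. This single observation drives the whole argument.

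With $\lambda=0$ recorded, I would proceed factor by factor on the product $U=M_1\times\cdots\times M_k$ supplied by Lemma \ref{lem}. For a Riemannian product the curvature tensor is the direct sum of the curvature tensors of the factors, and in particular any plane spanned by vectors tangent to two distinct factors has vanishing sectional curvature; hence $U$ is flat exactly when every $M_j$ is flat. Each factor with $m_j=1$ is one-dimensional and therefore trivially flat. For a factor with $m_j\geq 2$ I would invoke \eqref{sec}, which yields $K_{M_j}=\lambda/(m_j-1)=0$, so that $M_j$ has constant sectional curvature zero and is flat as well.

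Combining the two cases, every factor of the decomposition is flat, and consequently $U$ is flat. Given the machinery already in place the argument is essentially immediate once $\lambda=0$ is obtained, so I do not anticipate a genuine obstacle; the only point demanding a little care is the reduction step, namely confirming through the Riemannian product structure that the mixed planes contribute no curvature, so that flatness of $U$ is equivalent to, and not merely implied by, flatness of each $M_j$.
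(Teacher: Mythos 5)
Your proof is correct and follows exactly the route the paper intends: its own proof is the one-line remark that the claim ``follows from Lemmas \ref{lem} and \ref{lem2}'', and your argument simply fills in those details---setting $m_i=1$ in \eqref{eq} to force $\lambda=0$, then using the product structure from Lemma \ref{lem} together with \eqref{sec} to see every factor, and hence the product, is flat. The care you take with the reduction step (flatness of the product being equivalent to flatness of the factors, via the direct-sum decomposition of the curvature tensor) is exactly the right justification, so nothing is missing.
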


\begin{proof}
The proof follows from Lemmas \ref{lem} and \ref{lem2}. \qed
\end{proof}

\vspace{2ex}

\noindent \emph{Proof of Theorem \ref{main}:}
Let $f\colon M^n\to \Q_c^N,\ n\geq 3,$ be an isometric immersion 
of a connected Einstein manifold with Ricci curvature $\lambda$, flat normal 
bundle and parallel mean curvature vector field. 
If $c\neq 0$, we always view $\Q_c^N$ as an umbilical 
hypersurface of the Euclidean space $\R^{N+1}$
or the Lorentzian space $\Lo^{N+1}$ according to the sign of $c$.
In the following we are working on an open subset $G_k$.

If $k=1$, then $f$ is a totally umbilical immersion. 
In the sequel we assume that $k\geq 2$ and let $p\in G_k$. Then, according to Lemma 
\ref{lem} we have that there exists a neighborhood $U$ of $p$ that is a Riemannian 
product of Riemannian manifolds $M_1,\dots, M_k$ and 
$\left. f\right\vert_U$ is an extrinsic product of totally umbilical isometric immersions 
$f_1,\dots,f_k$.

We distinguish two cases.

If there exists $1\leq i\leq k$ such that $m_i=1$ then the 
result follows from Lemma \ref{lem3} and Theorem 1 of \cite{DaRu1}. 

We now assume that $m_i\geq 2$ for all $1\leq i\leq k$.
If $c=0$, then each $f_i$ is an umbilical isometric immersion into
$\R^{m_i+1}$, where $\R^N=\R^m\times\Pi_{i=1}^k \R^{m_i+1}$. 
Therefore, bearing in mind \eqref{sec} we obtain that
$$
f(U)\subset\Sp^{m_1}(\rho_1)\times\cdots\times 
\Sp^{m_k}(\rho_k)\subset\R^N.
$$

If $c>0$, then each $f_i$ is an umbilical isometric 
immersion into $\Sp^{m_i+1}(r_i)\subset \R^{m_i+2}$, where 
$\R^{N+1}=\R^m\times\Pi_{i=1}^k \R^{m_i+2}$. Thus, 
by using \eqref{sec} we obtain that
$$
f(U)\subset\Sp^{m_1}(\rho_1)\times\cdots\times 
\Sp^{m_k}(\rho_k)\subset\Sp_c^N.
$$

Finally, if $c<0$ then $f$ is the extrinsic product of umbilical 
isometric immersions $f_1,\dots,f_k$ of either hyperbolic, elliptic or parabolic type. 

If $f_1,\dots,f_k$ is of hyperbolic type, then $f_1$ is an umbilical isometric immersion
into $\Hy^{m_1+1}(r_1)\subset \Lo^{m_1+2}$ and each $f_i$ is an umbilical isometric immersion into 
$\Sp^{m_i+1}(r_i)\subset\R^{m_i+2},$ $2\leq i\leq k$, where
$\Lo^{N+1}=\Lo^{m_1+2}\times\Pi_{i=2}^k \R^{m_i+2}\times\R^{m_{k+1}}$. Using \eqref{sec}
we obtain that 
$$
f(U)\subset\Sp^{m_1}(\rho_1)\times\cdots\times 
\Sp^{m_k}(\rho_k)\subset\Hy_c^N.
$$
Clearly, $f(U)$ is contained in a totally umbilical submanifold of $\Hy_c^N$ of positive sectional
curvature.

If $f_1,\dots,f_k$ is of elliptic type then each $f_i$ is an umbilical isometric immersion into 
$\Sp^{m_i+1}(r_i)\subset\R^{m_i+2},$ where 
$\Lo^{N+1}=\Pi_{i=1}^{k} \R^{m_i+2}\times \Lo^{m}$. 
Bearing in mind \eqref{sec} we obtain that 
$$
f(U)\subset\Sp^{m_1}(\rho_1)\times\cdots\times 
\Sp^{m_k}(\rho_k)\subset\Hy_c^N.
$$
Clearly, $f(U)$ is contained in a flat totally umbilical submanifold of $\Hy_c^N$.

If $f_1,\dots,f_k$ is of parabolic type then there exist $s\leq k$
such that each $f_i$ is an umbilical isometric immersion into $\R^{m_i+1}$
for $1\leq i\leq s$ and each $f_j$ is an umbilical isometric immersion into
$\Sp^{m_j+1}(r_j)\subset\R^{m_j+2}$ for $s+1\leq j\leq k$ if $s<k$, where 
$\Lo^{N+1}=\Lo^{l}\times\Pi_{i=s+1}^k \R^{m_i+2}\times \R^{m_{k+1}}$.
Thus, from \eqref{sec} we obtain that 
$$
f(U)\subset{\sf i}\big(\Pi_{i=1}^s\Sp^{m_i}(\rho_i)\big) 
\times \Pi_{i=s+1}^k\Sp^{m_i}(\rho_i)\subset\Hy_c^N.
$$
Again in this case $f(U)$ is contained in a flat totally umbilical submanifold of $\Hy_c^N$.

Finally, since $M^n$ is connected and the above different type of submanifolds cannot be smoothly attached  
we have that $f(M^n)$ is an open subset of one of the above and this completes the 
proof. \qed

\bigskip

{\indent{\small\textsc{Department of Mathematics, University of Ioannina, 45110 Ioannina, Greece}}} \\
{\indent{\small {\it E-mail address}: \texttt{chonti@cc.uoi.gr}}}

\end{document}